\newtheorem{bigthm}{Theorem}
\newtheorem{theorem}{Theorem}[section]
\newtheorem{lemma}[theorem]{Lemma}
\newcommand{\eps}{\varepsilon}
\DeclareMathOperator{\li}{li}
\numberwithin{equation}{section}
\title{Additive bases arising from functions in a Hardy field}
\author{T.H. Chan} \address{Department of Mathematical Sciences\\
  University of Memphis\\ Memphis, TN 38152} \email{tchan@memphis.edu}
\author{A.V. Kumchev} \address{Department of Mathematics\\ 7800 York
  Road\\ Towson University\\ Towson, MD 21252}
\email{akumchev@towson.edu}
\author{M. Wierdl} \address{Department of Mathematical Sciences\\
  University of Memphis\\ Memphis, TN 38152}
\email{wierdlmate@gmail.com}
\thanks{The authors would like to thank
James Campbell, Nikos Frantzikinakis, Vladimir Nikiforov and
Andr\'as S\'ark\"ozy for helpful conversa\-tions.}
\begin{document}

\begin{abstract}
A classical additive basis question is Waring's problem. It has been
extended to integer polynomial and non-integer power sequences. In
this paper, we will consider a wider class of functions, namely
functions from a Hardy field, and show that they are asymptotic
bases.
\end{abstract}

\maketitle

\section{Introduction}

Let $k$ be a positive integer.  Waring's problem asks whether the
sequence $1^k,2^k,\dots$ of $k$th powers is an asymptotic basis.  In
other words, whether there are positive integers $s$ and $N_0$ such
that every integer $N$ greater than $N_0$ can be written in the form
\begin{equation}  \label{eq:1}
  n_1^k +n_2^k+\dots+n_s^k = N,
\end{equation}
with $n_1, n_2, \dots, n_s\in\mathbb N$. After Hilbert's affirmative
solution to this problem, several generalizations and extensions
were formulated, and important methods---such as the circle and
sieve methods, were developed to tackle those problems.

One variant of Waring's problem replaces the sequence of $k$th
powers by the range of an integer polynomial.  In other words, we
want to represent an integer $N$ in the form
\[
  f(n_1) + f(n_2) + \dots + f(n_s) = N,
\]
where $f(x) \in \mathbb Z[x]$. As a natural generalization of the
classical Waring problem, this question has been studied extensively
via the circle method (see Ford \cite{Ford99} for the history of the
problem and the best results to date).  In particular, it is known
that the sequence $f(n)$, $n=1,2,\dots$, is an asymptotic basis
whenever there is no integer $d \ge 2$ such that $d \mid
(f(n)-f(1))$ for all $n \in \mathbb N$.

Another generalization of Waring's problem is whether
\emph{non-integer} powers form a basis? It was Segal \cite{Segal}
who proved that for any fixed positive real number $c$, the sequence
$[1^c],[2^c],\dots$ of integer parts of $c$th powers is an
asymptotic basis. This question has been studied further by
Deshouillers \cite{Desh74} and by Arkhipov and Zhitkov
\cite{ArZh84}, among others. Whereas those authors focused on the
order of the basis (i.e., the number of unknowns in an equation
analogous to \eqref{eq:1} that ensure solvability), in this work we
ask the general question if the sequence $[n^c]$ can be replaced by
other sequences.  What other sequences do we have in mind?  Well,
for one, take a polynomial $p(x)$ with an irrational coefficient of
a non-constant term.  Is then $[p(n)]$, $n=1,2,\dots$ a basis? How
about more general sequences such as
\begin{align}  \label{eq:2}
  \left[\frac{n^2}{\log n}\right],
  \quad \text{or} \quad
  \bigg[\pi n^3 + \frac{n^{\sqrt 2}}{\log\log n}\bigg].
\end{align}
In other words, we are interested in sequences of the form
$[f(1)],[f(2)],\dots $ when $f$ is a smooth positive function such
that $f(x) \to \infty$ as $x \to \infty$. An obvious necessary
condition is that $f$ grows no faster than a polynomial:
\begin{equation}\label{1.1}
  f(x) \ll (|x| + 1)^k \quad \text{for some } k \in \mathbb N.
\end{equation}
For the rest of this note, unless we say otherwise, all functions
are assumed to satisfy this growth condition.

As we mentioned, the case of rational polynomials has been
extensively studied, so in this introduction we consider functions
$f(x)$ which are far away from rational polynomials in the sense
that
\begin{equation}
  \label{eq:3}
  |f(x)-g(x)| \to \infty \text{ for every } g(x)\in \mathbb Q[x].
\end{equation}

Our aim is a general theorem which will include Segal's theorem as a
special case, but it also includes the case of irrational
polynomials and is {easily} and readily applicable to somewhat wild
sequences such as those in \eqref{eq:2}.  Besides the growth
condition \eqref{1.1}, we need some regularity condition on $f$.
Instead of trying to describe various growth conditions on the
derivatives of the function $f$, we seek conditions which are easily
applicable to a wide class of functions, including those appearing
in \eqref{eq:2}.  As an illustration of the type of functions we
want to consider, take the so called logarithmico exponential
functions of Hardy. These are functions we obtain by writing down a
``formula'' using the symbols $\log,\exp,+,\cdot,x,c$ where $x$ is a
real variable and $c$ is a real constant.  This class of functions
certainly includes any power function since $x^c=\exp(c\log x)$.  It
similarly includes any polynomials.  Now our theorem will imply that
if $f(x)$ is a logarithmico exponential function that satisfies
\eqref{1.1} and \eqref{eq:3} then the sequence $[f(1)],[f(2)], \dots
$ forms a basis. We can see immediately that the sequences in
\eqref{eq:2} form a basis.  A small technical remark is that the
function $f$ may be defined only for large enough $x$ (for example,
consider $x^2\log\log\log x$).  In that case, we agree that we
consider, instead, $f(x+k)$ with a suitably large, fixed $k$.

While the logarithmico exponential functions already represent a
large class of functions, sequences involving the logarithmic
integral $\li x$ or $\Gamma$-functions are still not covered; we
want to admit functions such as
\begin{equation}
  \label{eq:4}
  (\li x)^2 \text{ or } (\log \Gamma(x))^{\sqrt 2},
\end{equation}
and even products, quotients, sums or differences of these
functions.

The functions we want to consider are those from a {Hardy field}.
For a more extensive introduction to Hardy fields and for further
references about the facts we claim below, see Boshernitzan
\cite{Bosh94}. To define Hardy fields, we consider first the ring of
continuous functions with pointwise addition and multiplication as
the ring operations.  Since functions such as $\log x$, $\log\log
x$, and $\log\log\log x$ are defined only on some neighborhood of
$\infty$, we want to consider \emph{germs} of functions, that is,
equivalence classes of functions, where we consider two functions
equivalent if they are equal in a neighborhood of $\infty$.  Let $B$
the ring of all these germs of continuous functions. A \emph{Hardy
field} is a subring of $B$ which is a {field}, and which is also
closed with respect to differentiation. It is known that $E$, the
intersection of all {maximal} Hardy fields, contains all
logarithmico exponential functions and is closed under
antidifferentiation; hence, $\li x \in E$. Another known fact is
that there is a Hardy field containing the $\Gamma$-function, and
that the intersection of all maximal translation invariant Hardy
fields contains $\log \Gamma(x)$.

We remark that by virtue of their very definition, functions in a
Hardy field are defined only on a neighborhood of $\infty$. On the
other hand, given a particular function in a Hardy field, we want to
think of it as being defined on $[1, \infty)$. To fix this, we
replace $f(x)$ by $f(x + k)$ for some sufficiently large constant
$k$. With this caveat, our main result is the following theorem.

\begin{bigthm}
  \label{thmA}
  Suppose that $f$ is a function from a Hardy field that
  satisfies the growth condition \eqref{1.1} and the condition
  \[
  \lim_{x \to \infty} \big| f(x) - g(x) \big| = \infty \qquad
  \text{for all $g(x) \in \mathbb Q[x]$}. \tag{\ref{eq:3}}
  \]
  Then the sequence $[f(n)]$, $n=1,2,\dots$, is an asymptotic basis.
\end{bigthm}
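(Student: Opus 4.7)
The plan is to prove Theorem~A by the Hardy--Littlewood circle method. Fix a large integer $N$, choose the basis order $s$ (to be determined), and let $X$ be the real number with $f(X) = N/s$; thanks to the growth hypothesis \eqref{1.1}, $X \asymp N^{1/c}$ where $c = \lim_{x \to \infty} \log f(x)/\log x \ge 1$ (this limit exists because $f$ lies in a Hardy field). Consider the exponential sum
\[
S(\alpha) = \sum_{n \le X} e\bigl(\alpha [f(n)]\bigr),
\]
so that the number of representations of $N$ as $[f(n_1)] + \cdots + [f(n_s)]$ equals $\int_0^1 S(\alpha)^s e(-\alpha N)\,d\alpha$. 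I would partition $[0,1)$ into major arcs $\mathfrak{M}$ near rationals with small denominator and complementary minor arcs $\mathfrak{m}$, and show that the major-arc contribution yields a positive main term of order $N^{s/c - 1}$ while the minor-arc contribution is of smaller order.

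The decisive step is a Weyl-type bound $\sup_{\alpha \in \mathfrak{m}} |S(\alpha)| \ll X^{1-\delta}$. I would first replace the floor by means of the standard Fourier--Vaaler expansion of $\{f(n)\}$, reducing to estimates for the smooth sums $\sum_{n \le X} e(\beta f(n))$ for real $\beta \neq 0$ in a suitable range. Hardy--field calculus then provides the needed regularity: the derivatives $f^{(j)}$ are eventually monotone, and one can compare $f^{(j)}(x)$ with $x^{c-j}$ up to logarithmic factors. A careful structure analysis splits into two regimes:
\begin{itemize}
\item If the growth exponent $c$ is non-integral, or more generally if some derivative $f^{(j)}$ lies between two consecutive integer powers, then van der Corput's $j$-th derivative test (or an appropriate exponent pair) produces a nontrivial saving uniformly in $\beta$.
\item If $c$ is an integer $k$, decompose $f(x) = P(x) + h(x)$ where $P$ is the (unique, Hardy-field) polynomial part and $h$ is a Hardy field function of strictly smaller growth. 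Condition \eqref{eq:3} then forces either (a) $P$ to have an irrational non-constant coefficient, to which classical Weyl's inequality applies, or (b) $h(x) \to \infty$ with sub-polynomial growth, whereupon a van der Corput estimate on $h$ combined with Weyl on $P$ gives the bound.
\end{itemize}

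On the major arcs I would use partial summation and the monotonicity of $f'$ to approximate $S(\alpha)$ by $q^{-1} A(q,a)\, v(\alpha - a/q)$, where $v(\beta) = \int_1^X e(\beta f(t))\,dt$ and $A(q,a)$ is a complete exponential sum in residues. Standard asymptotic evaluation then gives the singular integral $\mathfrak{J}(N) \asymp N^{s/c - 1}$ and a singular series $\mathfrak{S}(N) = \prod_p \chi_p(N)$. Positivity of $\mathfrak{S}(N)$ follows, for sufficiently large $s$, from the observation that the sequence $\{[f(n)] \bmod{p^r}\}$ is equidistributed in residues for almost all prime powers (using again the Hardy--field regularity of $f$), so every residue class is represented; for finitely many bad prime powers one takes $s$ large.

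The main obstacle is the Weyl-type bound on the minor arcs, specifically handling case~(b) above: when $f$ differs from an integer polynomial $P$ by an unbounded but very slowly growing Hardy--field term $h$ (for example, $h(x) = \log\log x$ or $\sqrt{\log x}$). Here neither Weyl's inequality for polynomials nor van der Corput on $h$ alone is strong enough; one needs to interlock the two, exploiting that differencing $f$ sufficiently many times isolates the derivatives of $h$, whose monotonicity and size are controlled by Hardy--field membership. Verifying that the savings this produces beat the trivial bound uniformly in $\alpha$ and are compatible with the major-arc width is the technical heart of the argument.
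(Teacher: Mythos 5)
Your proposal is a genuinely different route from the paper's, but there is a real gap at exactly the step you flag as the ``technical heart.'' The paper's circle-method argument (its Section~3) is deployed \emph{only} when the non-polynomial part $r$ of $f$ satisfies $r(x) \gg x^\delta$ for some $\delta > 0$. In that setting there is a single major arc, a short interval $|\alpha| \le \omega$ around $0$, and no singular series at all: after peeling off the floor via a Fourier expansion, the curvature supplied by $r$ at the $(d+1)$-st derivative makes van der Corput's $k$-th derivative estimate effective on the whole complementary range $\omega \le |\alpha| \le 1/2$, so one never needs rational approximations to $\alpha$ or Weyl's inequality. For the remaining cases --- $f$ a real polynomial with an irrational coefficient, or $f = p + r$ with $r$ slower than every positive power --- the paper abandons the circle method entirely. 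It shows via a Taylor expansion of $f(X + y_i)$ and solvability of the Hilbert--Kamke system that the $s$-fold sumset of $\{[f(n)]\}$ has bounded gaps, then invokes Boshernitzan's equidistribution criterion to verify a gcd condition, and concludes by an elementary ``bounded gaps plus gcd one implies asymptotic basis'' lemma.

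Your unified major/minor arc plan founders precisely in your case~(b), where $f = P + h$ with $P$ an integer polynomial and $h$ unbounded but sub-polynomial. Even granting a minor-arc saving (the ``interlocking'' of Weyl differencing on $P$ with monotonicity of $h$ is plausible but is not carried out), the major arcs are the real trouble. When $h(n)$ grows like $\log\log n$, the quantity $[h(n)]$ is constant over enormous ranges of $n$, so $[f(n)] \bmod q$ does not equidistribute among residue classes in the usual sense: the local densities depend on the current value $[h(X)]$, hence on $N$. The resulting singular series would oscillate with $N$ rather than stabilize at a positive constant, and proving it stays bounded away from zero for all large $N$ would require a much finer local analysis than the standard circle-method framework supplies. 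The paper's Hilbert--Kamke/bounded-gaps argument is specifically designed to sidestep this difficulty, and your outline neither supplies such a device nor provides the uniform minor-arc estimate it advertises.
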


The utility of the formulation of our theorem in terms of Hardy
fields is that it is \emph{easily} applicable. Indeed, by the
preceding remarks, the sequences in \eqref{eq:2} all form a basis as
well as sequences such as $[(\li n)^2]$ or $\big[(\log
  \Gamma(n))^{\sqrt 2}\big]$.

\section{Preliminaries}

\paragraph{\bf Notation.} Most of our notation is standard. We write
$e(x) = e^{2\pi ix}$. For a real number $\theta$, $[\theta]$,
$\{\theta\}$ and $\| \theta \|$ denote, respectively, the integer
part of $\theta$, the fractional part of $\theta$ and the distance
from $\theta$ to the nearest integer. Also, we use Vinogradov's
notation $A \ll B$ and Landau's big-oh notation $A = O(B)$ to
indicate that $|A| \le KB$ for some constant $K > 0$.

\subsection{Hardy fields}

Let $\mathfrak U$ denote the union of all Hardy fields. Suppose that
$f \in \mathfrak U$ satisfies condition \eqref{1.1} and that $f(x)
\to \infty$ as $x \to \infty$. We say that a function $f \in
\mathfrak U$ is \emph{non-polynomial} if for every $k \in \mathbb
N$, we have $f(x) = o(x^k)$ or $x^k = o(f(x))$ as $x \to \infty$.
Each function $f \in \mathfrak U$ that satisfies the growth
condition \eqref{1.1} falls in one of the following three classes
(see \cite{MR2145587}):
\begin{itemize}
\item [i)] $f(x) = p(x)$, where $p(x) \in \mathbb R[x]$;
\item [ii)] $f(x) = r(x)$, where $r$ is non-polynomial;
\item [iii)] $f(x) = p(x) + r(x)$, where $p(x) \in \mathbb R[x]$ and
  $r$ is non-polynomial, with $r(x) = o(p(x))$ as $x \to \infty$.
\end{itemize}
Our proof of Theorem~\ref{thmA} will be given in two parts each
utilizing different methods.  The first method handles the case when
$f$ is far away from all polynomials, and the second handles the
rest of the cases.  More precisely,the first part of the proof is
given in Section~\ref{s3}, and it handles the case when $f$ either
belongs to class ii) above or it belongs to class iii) but for some
positive $\delta$ we have $r(x) \gg x^\delta$.  The second part of
the proof is given in Section~\ref{s5} it handles the rest of the
functions, so when either $f$ is a polynomial (class i)), or when
$f$ belongs to class iii) and $r(x) \ll x^\delta$ for all positive
$\delta$.

When $f$ belongs to the classes i) or iii), these assumptions mean
that the polynomial part of $f$ has a positive degree $d$. For
functions of class iii), we call $d$ the \emph{degree of $f$}. When
$f$ is non-polynomial (so class ii)), then there exists a real
number $c \ge 0$ such that, for any fixed $\eps > 0$, one has
\begin{equation}\label{2.1}
  x^{c - j - \eps} \ll f^{(j)}(x) \ll x^{c - j + \eps} \qquad (j = 0, 1, 2, \dots),
\end{equation}
the implied constants depending at most on $f, j$ and $\eps$. (See
 \cite{MR2145587} for a proof.) For functions of
class ii), we call the number $c$ in \eqref{2.1} the \emph{degree of
  $f$}. Thus, we have now defined the degree of any function $f \in
\mathfrak U$ subject to \eqref{1.1}. We denote the degree of $f$ by
$d_f$. By \eqref{2.1},
\begin{equation}\label{2.2}
  x^{d_f - j - \eps} \ll f^{(j)}(x) \ll x^{d_f - j + \eps} \qquad (j = 0, 1, \dots, d_f)
\end{equation}
for any function $f \in \mathfrak U$ satisfying the above
hypotheses.

When $f$ is of classes ii) or iii), we denote by $c_f$ the degree of
its non-polynomial part. Thus, $c_f = d_f$ or $c_f = d_r$ according
as $f$ is of class ii) or iii). In particular, we have
\begin{equation}\label{2.3}
  x^{c_f - j - \eps} \ll f^{(j)}(x) \ll x^{c_f - j + \eps} \qquad (j > d_f).
\end{equation}

We also recall the following result of
Boshernitzan~\cite[Theorem~1.4]{Bosh94}.

\begin{lemma}\label{l0}
  Suppose that $f \in \mathfrak U$ satisfies condition
  \eqref{1.1}. Then the following two conditions are equivalent.
  \begin{itemize}
  \item [(a)] The sequence $\big( \{f(n)\} \big)_{n \in \mathbb N}$ is
    dense in $[0, 1)$.
  \item [(b)] For every polynomial $g(x) \in \mathbb Q[x]$, one has
    \[
    \lim_{x \to \infty} \big| f(x) - g(x) \big| = \infty.
    \]
  \end{itemize}
\end{lemma}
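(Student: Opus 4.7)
The plan is to establish the two implications separately; (b) $\Rightarrow$ (a) carries the real content, while (a) $\Rightarrow$ (b) is a short contrapositive argument using only the basic order structure of a Hardy field.

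For (a) $\Rightarrow$ (b), I would argue by contrapositive. Suppose some $g(x) \in \mathbb{Q}[x]$ is such that $|f(x) - g(x)|$ fails to tend to $\infty$. Since $f - g$ is an element of a Hardy field, it is eventually monotone and so tends to a limit in $\mathbb{R} \cup \{\pm\infty\}$; the failure of $|f-g| \to \infty$ forces this limit to be some finite $L \in \mathbb{R}$. Let $q$ be a common denominator of the coefficients of $g$, so that $\{g(n)\}$ takes at most $q$ distinct values as $n$ runs over $\mathbb{N}$. Then $\{f(n)\} = \{g(n) + L + o(1)\}$ can accumulate only at the $q$ points $\{L + j/q\}$, $0 \le j < q$, contradicting density in $[0,1)$.

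For (b) $\Rightarrow$ (a) I would invoke Weyl's equidistribution criterion, reducing the statement to the bound
\[
    S_h(N) := \sum_{n \le N} e\bigl(h f(n)\bigr) = o(N) \quad \text{as } N \to \infty
\]
for every fixed nonzero integer $h$. I would split into cases according to the trichotomy introduced earlier in this section. In class i), condition (b) forces some non-constant coefficient of $hf$ to be irrational (otherwise $hf$ would differ from a rational polynomial by only a bounded amount), and Weyl's classical theorem for polynomials yields $S_h(N) = o(N)$. In class ii) with degree $c_f$, I would apply van der Corput's differencing (A-process) $k = \lfloor c_f \rfloor$ times; after each step the phase is differentiated once more, and \eqref{2.2} provides $f^{(j)}(x) \asymp x^{c_f - j}$ up to an $N^{\eps}$ factor, so the final sum has a second derivative of the phase of order $N^{c_f - k - 1}$ with $-1 < c_f - k - 1 \le 0$. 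The standard second-derivative estimate $\sum e(\Phi(n)) \ll N \lambda^{1/2} + \lambda^{-1/2}$ then gives the required savings. In class iii), I would write $f = p + r$ and handle $hp$ by Weyl's theorem exactly as in class i) and $hr$ by van der Corput using \eqref{2.3}; condition (b), tested against rational truncations of $p$, forces at least one of the two pieces to produce oscillation.

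The main obstacle is the borderline case in which $c_f$ (or, in class iii), the degree $c_r$ of the non-polynomial part of $f$) is exactly an integer. Here the derivative estimates \eqref{2.2}--\eqref{2.3} are too weak to force cancellation in a single application of van der Corput, because a relevant derivative of $f$ is then comparable to a constant; one has to exploit the fact that $f \in \mathfrak{U}$ is genuinely non-polynomial to extract a nontrivial (slowly decaying) lower bound on the next derivative and iterate the differencing carefully. A related difficulty arises in class iii) when $r(x)$ grows slower than any positive power of $x$ (e.g.\ like $(\log x)^A$): then the polynomial part $p$ must carry the oscillation, and one has to verify that (b), applied to every rational polynomial, really does force some nonconstant coefficient of $p$ to be irrational.
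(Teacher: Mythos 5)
The paper itself does not prove this lemma: it is quoted verbatim as Theorem~1.4 of Boshernitzan~\cite{Bosh94}, so there is no internal proof to compare against. Your task, then, is essentially to reconstruct Boshernitzan's argument, and the reconstruction has one sound half and one half that aims at a false statement.

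Your argument for (a) $\Rightarrow$ (b) is essentially correct. The only point worth making explicit is why $f-g$ lies in a Hardy field: $g$, being a polynomial, belongs to the intersection $E$ of all maximal Hardy fields, so extending the Hardy field containing $f$ to a maximal one places $f$ and $g$ in a common Hardy field, and then $f - g$ is eventually monotone with a limit in $\mathbb{R}\cup\{\pm\infty\}$. From there, the observation that $\{g(n)\}$ takes finitely many values and hence $\{f(n)\}$ has finitely many accumulation points does contradict density.

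The (b) $\Rightarrow$ (a) half contains a genuine gap that is more basic than the ``obstacles'' you flag at the end. You reduce density of $\big(\{f(n)\}\big)$ to the Weyl criterion $\sum_{n\le N} e\big(hf(n)\big)=o(N)$, i.e.\ to \emph{equidistribution}. But under hypotheses \eqref{1.1} and (b) equidistribution need not hold, so the reduction is invalid, not merely technically difficult. Take $f(x)=\log x$: this is a logarithmico-exponential function, hence in $\mathfrak U$, it satisfies \eqref{1.1}, and $|\log x - g(x)|\to\infty$ for every $g\in\mathbb{Q}[x]$. The sequence $\{\log n\}$ is dense in $[0,1)$ (the increments $\log(n+1)-\log n\to 0$ while $\log n\to\infty$, so the fractional parts sweep through $[0,1)$ with vanishing step), yet it is famously \emph{not} equidistributed: $\sum_{n\le N} n^{2\pi i h}\gg N$. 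So the Weyl-criterion target $S_h(N)=o(N)$ is simply false for this admissible $f$. Any correct proof of (b) $\Rightarrow$ (a) must use a density-specific mechanism for the slowly growing cases (typically a ``slow sweep'' argument on the fractional parts, sometimes restricted to a residue class modulo the common denominator of the rational polynomial part), reserving Weyl/van der Corput for the regimes where derivatives genuinely force cancellation. Your class i) argument is fine, and your class ii)/iii) van der Corput machinery is the right tool when $c_f$ or $c_r$ is bounded away from $0$, but it cannot be the whole story, and recognizing that the goal is density rather than equidistribution is what dissolves the borderline ``integer $c_f$'' and ``$r(x)$ sub-polynomial'' difficulties you raise.
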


\subsection{Bounds on exponential sums}

\begin{lemma}\label{l1}
  Let $k \ge 2$ be an integer and put $K = 2^k$. Suppose that $a \le b
  \le a + N$ and that $f:[a, b] \to \mathbb R$ has continuous $k$th
  derivative that satisfies the inequality $0 < \lambda \le
  |f^{(k)}(x)| \le h\lambda$ for all $x \in [a, b]$. Then
  \[
  \sum_{a \le n \le b} e (f(n)) \ll hN\big( \lambda^{1/(K - 2)} +
  N^{-2/K} + (N^k\lambda)^{-2/K} \big).
  \]
\end{lemma}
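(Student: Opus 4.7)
The plan is to prove the lemma by induction on $k$. The base case $k = 2$ (so $K = 4$) is essentially a repackaging of van der Corput's classical second derivative test: under the hypothesis $\lambda \le |f''(x)| \le h\lambda$ on $[a,b]$ with $b - a \le N$, one has $\sum_{a \le n \le b} e(f(n)) \ll hN\lambda^{1/2} + \lambda^{-1/2}$. Since $h \ge 1$ and $(N^2\lambda)^{-1/2} = N^{-1}\lambda^{-1/2}$, this is bounded by $hN\bigl(\lambda^{1/2} + N^{-1/2} + (N^2\lambda)^{-1/2}\bigr)$, which is the claim.

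For the inductive step, assume the lemma for order $k-1$ with $K' = 2^{k-1} = K/2$, and apply the Weyl--van der Corput differencing inequality with a parameter $H$ satisfying $1 \le H \le N$:
\[
|S|^2 \ll \frac{N^2}{H} + \frac{N}{H}\sum_{j=1}^{H-1}\Bigl|\sum_{n \in I_j} e\bigl(g_j(n)\bigr)\Bigr|,
\]
where $g_j(x) := f(x+j) - f(x)$ and $I_j \subseteq [a, b-j]$. By the mean value theorem,
\[
g_j^{(k-1)}(x) = f^{(k-1)}(x+j) - f^{(k-1)}(x) = j\, f^{(k)}(\xi)
\]
for some $\xi \in (x, x+j)$, and hence $j\lambda \le |g_j^{(k-1)}(x)| \le jh\lambda$ on $I_j$. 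This fits the hypothesis of the induction at order $k-1$ with lower bound $j\lambda$ and the \emph{same} ratio $h$, which is the crucial point: differencing does not inflate the oscillation constant. Applying the inductive bound to each $j$, summing via $\sum_{j \le H} j^\alpha \asymp H^{\alpha+1}$ for $\alpha > -1$, and using $K - 2 = 2(K'-1)$, one arrives at
\[
|S|^2 \ll \frac{N^2}{H} + hN^2\Bigl(\lambda^{1/(K'-2)} H^{1/(K'-2)} + N^{-2/K'} + (N^{k-1}\lambda)^{-2/K'} H^{-2/K'}\Bigr).
\]

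The final and most delicate step is the optimization in $H$. The three target terms $\lambda^{1/(K-2)}$, $N^{-2/K}$, and $(N^k\lambda)^{-2/K}$ arise from three different essential choices of $H$: balancing $N^2/H$ against the first summand gives $H \asymp h^{-(K'-2)/(K'-1)}\lambda^{-1/(K'-1)}$ and reproduces $\lambda^{2/(K-2)}$ after using $1/(K'-1) = 2/(K-2)$; the middle summand is essentially independent of $H$ and already matches $N^{-4/K}$; and choosing $H = N$ converts $(N^{k-1}\lambda)^{-2/K'}H^{-2/K'}$ into $(N^k\lambda)^{-2/K'} = (N^k\lambda)^{-4/K}$. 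The main obstacle is thus not any one step but a careful case analysis verifying that in every parameter regime the optimal $H$ lies in $[1,N]$ (and, when forced to a boundary, that the resulting estimate is still dominated by the maximum of the three target terms). Taking square roots and using $h \ge 1$ to absorb lower powers of $h$ into $h^2$ then completes the induction.
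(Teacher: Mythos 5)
The paper does not supply its own proof of this lemma: the stated ``proof'' is a one-line citation to van der Corput's original paper (Satz~4 of \cite{vdCo29}). What you have written out is the standard argument by which the $k$th-derivative test is established --- induction on $k$, with the Weyl--van der Corput ($A$-process) inequality dropping the order by one and the second-derivative (Kusmin--Landau based) test serving as the base --- and this is almost certainly the content of van der Corput's Satz~4. So you are supplying the elided argument rather than taking a genuinely different route. Your bookkeeping is correct where it is explicit: the observation that differencing yields $j\lambda \le |g_j^{(k-1)}| \le jh\lambda$, so the ratio $h$ is preserved and need not be squared at every stage, is exactly why the $h$-dependence in the lemma survives the induction with only a bounded power; the identities $K-2 = 2(K'-1)$ and $4/K = 2/K'$ make the exponents match; and $H = N$ does convert $(N^{k-1}\lambda)^{-2/K'}H^{-2/K'}$ into $(N^k\lambda)^{-4/K}$.

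The one place your narrative is slightly off is in tying each target term to one of two choices of $H$. In the regime where $N^{-2/K}$ is the dominant target term, it is not enough that the middle summand $hN^{2-2/K'}$ be $H$-independent; one must also choose $H$ so the \emph{other} three summands of the $|S|^2$ bound stay below $h^2N^{2-2/K'}$, and this forces a \emph{third} choice of $H$, roughly where the increasing summand $hN^2\lambda^{1/(K'-2)}H^{1/(K'-2)}$ balances $h^2N^{2-2/K'}$, i.e.\ $H \asymp h^{K'-2}N^{-2(K'-2)/K'}\lambda^{-1}$ --- which is neither your $H_1 \asymp h^{-(K'-2)/(K'-1)}\lambda^{-1/(K'-1)}$ nor $H=N$. (For $k=3$, $h=1$, this is $H \asymp (N\lambda)^{-1}$ on the range $N^{-2} \le \lambda \le N^{-3/2}$; at $H=N$ the summand $hN^{5/2}\lambda^{1/2}$ would exceed the target $N^{3/2}$ near the upper end of that range.) You do flag the optimization as a delicate case analysis, which is honest, but spelling out this intermediate choice and checking the boundary regimes $\lambda \asymp N^{1-k}$ and $\lambda \asymp N^{2/K'-2}$ is what would close the argument. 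No idea is missing; it is a matter of carrying out the arithmetic you have outlined.
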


\begin{proof}
  This is a variant of van der Corput \cite[Satz 4]{vdCo29}.
\end{proof}

\subsection{The Hilbert--Kamke problem}
\label{s2.4}

Let $N_1, \dots, N_k$ be large positive integers. The Hilbert--Kamke
problem is concerned with the system of Diophantine equations
\[
x_1^j + x_2^j + \dots + x_s^j = N_j \quad (1 \le j \le k).
\]
It is known that this system has solutions in positive integers
$x_1, \dots, x_s$, provided that:
\begin{itemize}
\item [(a)] $s$ is sufficiently large;
\item [(b)] there exist real numbers $\mu_1, \dots, \mu_{k-1} > 1$ and
  $\nu_1, \dots, \nu_{k-1} < 1$ such that
  \[
  \mu_j N_k^{j/k} \leq N_j \leq \nu_j s^{1 - j/k} N_k^{j/k} \quad (1
  \leq j < k);
  \]
\item [(c)] the $N_j$'s satisfy the congruences
  \[
  \Delta_j(N_1, \dots, N_k) \equiv 0 \pmod {\Delta_0} \quad (1 \le
  j \le k),
  \]
  where
  \begin{equation}\label{2.4}
    \Delta_0 = \begin{vmatrix} 1 & 2 & \cdots & k \\ 1^2 & 2^2 & \cdots & k^2 \\ \vdots & \vdots & & \vdots \\ 1^k & 2^k & \cdots & k^k \end{vmatrix} = 1!2! \cdots k!
  \end{equation}
  and $\Delta_j(N_1, \dots, N_k)$ is the determinant resulting from
  replacing the numbers $j, \dots, j^k$ in the $j$th column of
  $\Delta_0$ by $N_1, \dots, N_k$, respectively.
\end{itemize}
The reader can find a proof of the sufficiency of conditions
(a)--(c) above in \cite[\S2.7]{GeLi66}, for example.

\subsection{Bounded gaps imply basis}
\label{s2.3}

For a sequence $\mathcal A = (a_n)_{n \in \mathbb N}$, we define the
\emph{sumset} $s\mathcal A$ by
\[
s\mathcal A = \big\{ a_1 + a_2 + \dots + a_s \mid a_i \in \mathcal A
\big\}.
\]
In the proof, we shall need the following elementary result.

\begin{lemma}\label{l3}
  Let $\mathcal A = (a_n)_{n \in \mathbb N}$ be an integer sequence
  such that:
  \begin{itemize}
  \item [(a)] for some $s \in \mathbb N$, the sumset $s\mathcal A$ has
    bounded gaps;
  \item [(b)] $\gcd\big\{ a_n - a_1 \mid n \in \mathbb N \big\} = 1$.
  \end{itemize}
  Then $\mathcal A$ is an asymptotic basis.
\end{lemma}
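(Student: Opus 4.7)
The plan is to identify an $h$ for which $h\mathcal{A}$ contains every sufficiently large integer. It will be slightly more convenient to establish the equivalent statement that $N + C' \in h\mathcal{A}$ for a fixed constant $C'$ and every large $N$. First, using (a), fix $L$ and $N_0$ so that every $N \ge N_0$ admits $b \in s\mathcal{A}$ with $j := N - b \in \{0,1,\dots,L\}$. Using (b), choose finitely many $n_1,\dots,n_r$ with $\gcd(g_1,\dots,g_r) = 1$, where $g_i := a_{n_i} - a_1$, and invoke B\'ezout to represent each $j \in \{0,1,\dots,L\}$ as $j = \sum_i c_i(j)\, g_i$ with $c_i(j) \in \mathbb{Z}$; writing $c_i(j) = c_i^+(j) - c_i^-(j)$ with $c_i^{\pm}(j) \ge 0$, these coefficients are bounded uniformly in $j$ by some constant $C$.

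A direct substitution of the B\'ezout identity into $N = b + j$ relates $N$ to an $\mathcal{A}$-sumset only up to a correction term that depends on $j$, which merely recovers the bounded-gap property of a slightly larger sumset. I would instead apply the bounded-gap property to the translate $\mathcal{A}^{\ast} := s\mathcal{A} + C(a_{n_1} + \dots + a_{n_r})$, which has the same gap constant as $s\mathcal{A}$ and is contained in $(s+rC)\mathcal{A}$. Every element $b^{\ast} \in \mathcal{A}^{\ast}$ carries a distinguished representation as a sum of $s + rC$ elements of $\mathcal{A}$ in which each $a_{n_i}$ appears at least $C \ge c_i^-(j)$ times, allowing me to swap $c_i^-(j)$ copies of $a_{n_i}$ for the same number of $a_1$'s internally. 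The resulting $b' \in (s + rC)\mathcal{A}$ satisfies $b' = b^{\ast} - \sum_i c_i^-(j)\, g_i$, and hence
\[
N \,=\, b^{\ast} + j \,=\, b' + \sum_i c_i^+(j)\, g_i \,=\, b' + \sum_{i=1}^{r} c_i^+(j)\, a_{n_i} - C^+(j)\, a_1,
\]
where $C^+(j) := \sum_i c_i^+(j) \le rC$. Padding the right-hand side with $rC - C^+(j)$ additional copies of $a_1$ then exhibits $N + rC\, a_1$ as a sum of exactly $h := s + 2rC$ elements of $\mathcal{A}$, so $h\mathcal{A}$ eventually contains every integer and the conclusion follows.

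I expect the main obstacle to be the pre-inflation trick above: the natural first attempt produces a $j$-dependent correction term and only re-establishes the bounded-gap hypothesis inside a larger sumset, leaving one in a loop. The key observation is that once $b$ is inflated by a fixed, $j$-independent surplus of each $a_{n_i}$, the negative B\'ezout coefficients can be absorbed by an internal $a_{n_i} \to a_1$ replacement, after which the resulting identity can be padded to a representation of $N$ (shifted by a fixed constant) using a bounded number of terms from $\mathcal{A}$.
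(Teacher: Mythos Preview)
Your proof is correct and follows essentially the same approach as the paper's: both use a B\'ezout identity on finitely many differences $a_{n_i}-a_1$ to bridge the bounded gaps of $s\mathcal A$, absorbing the negative coefficients by a fixed shift and then padding with copies of $a_1$ to equalize the number of summands. The only cosmetic differences are that the paper represents $1$ once via B\'ezout and scales by the gap value (rather than representing each $j\in\{0,\dots,L\}$ separately), and it phrases the shift as subtracting a fixed quantity from $N$ before invoking the bounded-gap hypothesis rather than as your ``pre-inflation'' of $b$.
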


This lemma can be derived from more general results by Erd\H os and
Graham \cite[Theorem 1]{ErGr80} or by Nash and Nathanson \cite[Lemma
1]{NaNa85}. For the sake of completeness, we present a direct proof.

\begin{proof}
  By hypothesis (b), we have
  \[
  \gcd(a_2 - a_1, a_3 - a_1, \dots, a_k - a_1) = 1
  \]
  for some $k \in \mathbb N$. Thus, there exist integers $x_2, \dots,
  x_k$ such that
  \begin{equation}\label{2.6}
    \sum_{j = 2}^k x_j(a_j - a_1) = 1.
  \end{equation}
  Define the integers $a_j'$ and $a_j''$, $2 \le j \le k$, by
  \[
  a_j' = \begin{cases} a_j & \text{if } x_j \ge 0, \\ a_1 & \text{if }
    x_j < 0; \end{cases} \qquad a_j'' = \begin{cases} a_1 & \text{if }
    x_j \ge 0, \\ a_j & \text{if } x_j < 0. \end{cases}
  \]
  We can rewrite \eqref{2.6} as
  \begin{equation}\label{2.7}
    \sum_{j = 2}^k |x_j|a_j' = 1 + \sum_{j = 2}^k |x_j|a_j''.
  \end{equation}

  Let $g$ and $M$ be, respectively, the largest gap of $s\mathcal A$
  and the least element of $s\mathcal A$, and suppose that $N$ is an
  integer with
  \[
  N \ge M + g\sum_{j = 1}^k |x_j|a_j''.
  \]
  Then
  \[
  N - g\sum_{j = 1}^k |x_j|a_j'' = b + h
  \]
  for some integers $b$ and $h$, with $b \in s\mathcal A$ and $0 \le h
  \le g$. Hence, by \eqref{2.7},
  \begin{align*}
    N = b + h + g\sum_{j = 1}^k |x_j|a_j'' = b + h\sum_{j = 1}^k
    |x_j|a_j' + (g - h)\sum_{j = 1}^k |x_j|a_j''.
  \end{align*}
  This establishes that every sufficiently large $N$ is the sum of $s
  + g\sum_j|x_j|$ elements of $\mathcal A$. Thus, $\mathcal A$ is an
  asymptotic basis.
\end{proof}

\section{Proof of Theorem \ref{thmA} in case $r(x) \gg x^\delta$ }
\label{s3}

Let $\delta > 0$ and consider a function $f \in \mathfrak U$ with
its non polynomial part $r(x)$ satisfying $r(x) \gg x^\delta$.  For
simplicity, we write $d = d_f$ and $c = c_f$. In the case when $f$
is of class ii), we assume that $d \ge 1$; otherwise, the sequence
$a_n = [f(n)]$ contains all sufficiently large integers, and the
result is trivial.

\subsection{A variant of the circle method}
\label{s3.1}

Let $s \in \mathbb N$ and suppose that $N \ge N_0(s, d, \delta)$,
where $\delta$ is the positive number from the hypotheses of the
theorem. We set $X = N^{1/d}$ and $N_s = N/(s + 1)$, and we choose
$X_0$ and $X_1$ so that
\[
f(X_0) = N_s, \quad f(X_1) = 2N_s.
\]
Let $R_s(N)$ denote the number of solutions of the equation
\[
[f(n_1)] + [f(n_2)] + \dots + [f(n_s)] = N
\]
in integers $n_1, n_2, \dots, n_s$ with $X_0 < n_i \le X_1$. Then
\begin{equation}\label{3.1}
  R_s(N) = \int_{-1/2}^{1/2} S(\alpha)^s e(-\alpha N) \, d\alpha,
\end{equation}
where
\[
S(\alpha) = \sum_{X_0 < n \le X_1} e (\alpha[f(n)]).
\]
Put $\omega = X^{1/2 - d}$. We will show that when $s \ge 3$ and $0
< \eps < (6s)^{-1}$, one has
\begin{gather}
  \int_{-\omega}^{\omega} S(\alpha)^se(-\alpha N) \, d\alpha \gg X^{s - d - 2\eps}; \label{3.2}\\
  \sup_{\omega \le |\alpha| \le 1/2} |S(\alpha)| \ll X^{1 - \sigma +
    \eps}, \label{3.3}
\end{gather}
where $\sigma = \sigma(c, d) > 0$. Clearly, the theorem follows from
\eqref{3.1}--\eqref{3.3}.

Suppose that $|\alpha| \le \omega$ and define
\[
T(\alpha) = \sum_{X_0 < n \le X_1} e(\alpha f(n)), \quad I(\alpha) =
\int_{X_0}^{X_1} e(\alpha f(t)) \, dt.
\]
We have
\begin{equation}\label{3.4}
  S(\alpha) = T(\alpha) + O(\omega X_1).
\end{equation}
Furthermore, since
\[
\sup_{X_0 \le t \le X_1} |\alpha f'(t)| \ll \omega X^{d - 1 + \eps}
< 1/2,
\]
\cite[Lemma 8.8]{IwKo04} gives
\begin{equation}\label{3.5}
  T(\alpha) = I(\alpha) + O(1).
\end{equation}
Let $\Delta_1 = \omega X_1 + 1$. Combining \eqref{3.4} and
\eqref{3.5}, we find that
\begin{equation}\label{3.6}
  \int_{-\omega}^{\omega} \big| S(\alpha)^s - I(\alpha)^s \big| \, d\alpha \ll \Delta_1 \int_{-\omega}^{\omega} |I(\alpha)|^{s - 1} \, d\alpha + \omega\Delta_1^s.
\end{equation}
Since
\[
\inf_{X_0 \le t \le X_1} |\alpha f'(t)| \gg |\alpha| X^{d - 1 -
\eps},
\]
we deduce from \cite[Lemma 8.10]{IwKo04} that
\[
I(\alpha) \ll |\alpha|^{-1}X^{1 - d + \eps}.
\]
From the last inequality and the trivial bound for $I(\alpha)$, we
obtain
\begin{equation}\label{3.7}
  I(\alpha) \ll \frac {X^{1 + \eps}}{1 + X^d|\alpha|}.
\end{equation}
Hence, for $s \ge 3$,
\[
\int_{-\omega}^{\omega} |I(\alpha)|^{s - 1} \, d\alpha \ll
\int_{-\omega}^{\omega} \frac {X^{s - 1 + (s - 1)\eps} \,
d\alpha}{(1
  + X^d|\alpha|)^{s - 1}} \ll X^{s - d - 1 + (s - 1)\eps}.
\]
Upon noting that
\[
\Delta_1 \ll X^{3/2 - d + \eps} \ll X^{1/2 + \eps},
\]
we deduce from \eqref{3.6} that
\begin{equation}\label{3.8}
  \int_{-\omega}^{\omega} \big| S(\alpha)^s - I(\alpha)^s \big| \, d\alpha \ll X^{s - d - 1/3}.
\end{equation}

We now evaluate
\[
\int_{-\omega}^{\omega} I(\alpha)^s e(-\alpha N) \, d\alpha.
\]
By \eqref{3.7},
\[
\int_{|\alpha|>\omega} |I(\alpha)|^s \, d\alpha \ll
\int_{\omega}^{\infty} \frac {X^{s + s\eps} \, d\alpha}{(1 + \alpha
  X^{d})^s} \ll X^{s - d - 1/3},
\]
so
\begin{equation}\label{3.9}
  \int_{-\omega}^{\omega} I(\alpha)^s e(-\alpha N) \, d\alpha = \int_{\mathbb R} I(\alpha)^s e(-\alpha N) \, d\alpha + O\left( X^{s - d - 1/3} \right).
\end{equation}
If $g$ is the inverse function to $f$ on the interval $X_0 \le t \le
X_1$, then
\[
I(\alpha) = \int_{N_s}^{2N_s} g'(u) e(\alpha u) \, du = V(\alpha),
\quad \text{say}.
\]
By Fourier's inversion formula, the integral on the right side of
\eqref{3.9} equals
\[
\int_{\mathcal D_s} g'(u_1) \cdots g'(u_{s - 1}) g'(N - u_1 - \dots
- u_{s - 1}) \, d\mathbf u,
\]
where $\mathcal D_s$ is the $(s - 1)$-dimensional region defined by
the inequalities
\[
N_s \le u_1, \dots, u_{s - 1} \le 2N_s, \quad N_s \le N - u_1- \dots
- u_{s - 1} \le 2N_s.
\]
Note that $\mathcal D_s$ contains the $(s - 1)$-dimensional box
\[
N/(s + 1) \le u_1, \dots, u_{s - 1} \le N/s
\]
and
\[
\inf_{N_s \le u \le 2N_s} g'(u) \gg X_1^{1 - d - \eps/s}.
\]
We deduce that
\begin{equation}\label{3.10}
  \int_{\mathbb R} I(\alpha)^s e(-\alpha N) \, d\alpha \gg N^{s - 1}X_1^{s - sd - \eps} \gg X^{s - d - 2\eps}.
\end{equation}
Combining \eqref{3.8}--\eqref{3.10}, we obtain \eqref{3.2}.

We now proceed to the estimation of $S(\alpha)$ on the two minor
arcs. For non-integer reals $x, \alpha$ and $K \ge 2$, we have
\begin{equation}\label{3.11}
  e(-\alpha\{x\}) = c(\alpha) \sum_{|k| \le K} \frac {e(kx)}{k + \alpha} + O \left( \Phi(x; K) \log K \right),
\end{equation}
where $|c(\alpha)| \le \|\alpha\|$ and $\Phi(x; K) = (1 +
K\|x\|)^{-1}$. Furthermore,
\begin{equation}\label{3.12}
  \Phi(x; K) = \sum_{k \in \mathbb Z} b_ke(kx), \qquad |b_k| \ll \frac {K\log K}{K^2 + |k|^2}.
\end{equation}
By \eqref{3.11},
\begin{align}\label{3.13}
  S(\alpha) &= \sum_{X_0 < n \le X_1} e(\alpha f(n))e(-\alpha\{f(n)\}) \notag\\
  &= \sum_{|k| \le K} \frac {|c(\alpha)|}{|k + \alpha|} |T(k +
  \alpha)| + O( \Delta(K)\log K ),
\end{align}
where
\begin{equation}\label{3.14}
  \Delta(K) = \sum_{X_0 < n \le X_1} \Phi(f(n);K) \ll  \sum_{k \in \mathbb Z} |b_k||T(k)|.
\end{equation}
Combining \eqref{3.12}--\eqref{3.14}, we obtain
\begin{equation}\label{3.15}
  \sup_{\omega \le |\alpha| \le 1/2} |S(\alpha)| \ll \Big( \sup_{\omega \le |\beta| \le K^2}|T(\beta)| + X_1K^{-1} \Big)\log^2K.
\end{equation}
The estimate \eqref{3.3} follows readily from \eqref{3.15} with $K =
X^{\sigma}$ and the bound
\begin{equation}\label{3.16}
  \sup_{\omega \le |\beta| \le X^{2\sigma}} |T(\beta)| \ll X^{1 - \sigma},
\end{equation}
which we establish in the next section.

\subsection{Estimation of $T(\beta)$}
\label{s3.2}

We now establish \eqref{3.16}. By a standard dyadic argument, we can
reduce \eqref{3.16} to the estimation of the exponential sum
\begin{equation}\label{3.17} W(\beta) = \sum_{P < n \le P_1} e
  \big( \beta f(n) \big),
\end{equation}
where $X_0 \le P < P_1 \le \min(2P, X_1)$ and $P^{1/2 - d - \eps}
\le |\beta| \le P^{2\sigma + \eps}$. We also assume, as we may, that
$0 < \sigma < \frac 14$.

Suppose first that $f$ is of class iii) and set $\eta = \frac 14
\min(1, c)$. We consider two cases depending on the relative sizes
of $\beta$ and $P$.

\paragraph*{{\bf Case 1:}} $P^{1/2 - d - \eps} \le |\beta| \le P^{-c +
  \eta}$. Let
\[
p(x) = \alpha x^d + \cdots + \alpha_kx^k
\]
be the polynomial part of $f$. When $d \ge 2$, we apply Lemma
\ref{l1} with $k = d$, $N = P$, $\lambda = |\beta|$, and $h =
P^{\eps}$. We obtain
\[
W(\beta) \ll P^{1 + \eps} \big( |\beta|^{1/(K - 2)} + P^{-2/K} +
(|\beta|P^d)^{-2/K} \big) \ll P^{1 - \sigma_1 + \eps},
\]
where $K = 2^d$ and $\sigma_1 = \min( 1, c/2 )K^{-1}$. When $d = 1$,
we have
\[
|\beta|P^{-\eps} \ll |\beta f'(x)| \ll |\beta|P^{\eps},
\]
so the Kuzmin--Landau inequality (see \cite[Corollary 8.11]{IwKo04})
gives
\[
W(\beta) \ll P^{\eps}|\beta|^{-1} \ll P^{1/2 + 2\eps}.
\]

\paragraph*{{\bf Case 2:}} $P^{-c + \eta} \le |\beta| \le P^{2\sigma +
  \eps}$. We recall \eqref{2.3} and apply Lemma \ref{l1} with $k = d +
1$, $N = P$, $\lambda = |\beta|P^{c - k - \eps/4}$, and $h =
P^{\eps/2}$. We get
\[
W(\beta) \ll P^{1 + \eps} \big( (|\beta|P^{c - k})^{1/(J - 2)} +
P^{-2/J} + (|\beta|P^c)^{-2/J} \big),
\]
where $J = 2^{d + 1}$. Since
\[
(|\beta|P^{c - k})^{1/(J - 2)} \ll P^{(2\sigma - 1 + \eps)/(J - 2)}
\ll P^{-1/(2J)},
\]
we deduce that
\[
W(\beta) \ll P^{1 + \eps} \big( P^{-1/(2J)} + P^{-2\eta/J} \big).
\]

Combining the above estimates, we conclude that when $f$ is of class
iii), \eqref{3.16} holds with $\sigma = 2^{-d - 2}\min(c, 1)$. When
$f$ is of class ii), we can argue similarly to Case 2 above to show
that \eqref{3.16} holds with $\sigma = 2^{-k - 1}$, where $k =
\lceil d + 1 \rceil$. \qed

\section{Proof of Theorem \ref{thmA} when $r(x) \ll
  x^\delta$ for all $\delta$}
\label{s5}

In this section, we assume that $f$ is either of class i) or it is
of class iii) with $r(x) \ll x^\delta$ for all $\delta$. In this
case, Theorem~\ref{thmA} follows from Lemma \ref{l3}. Under the
hypotheses of Theorem~\ref{thmA}, we have
\[
\lim_{x \to \infty} \big| q^{-1}f(x) - g(x) \big| = \infty
\]
whenever $q \in \mathbb N$ and $g(x) \in \mathbb Q[x]$. Hence, by
Lemma \ref{l0}, the fractional parts $\big\{ q^{-1}f(n) \big\}$ are
dense in $[0, 1)$. In particular, the Diophantine inequality
\[
\frac {a - 1}q \le \bigg\{ \frac {f(n)}q \bigg\} < \frac {a}q.
\]
has solutions for any given integers $a$ and $q$, with $1 \le a \le
q$. Therefore, every arithmetic progression $a \pmod q$ contains an
element of $\mathcal A$. This establishes that $\mathcal A$
satisfies hypothesis (b) of Lemma \ref{l3}.

We see, that it is enough to show that the set of sums $[f(x_1)] +
\dots + [f(x_s)]$ has bounded gaps when $s$ is sufficiently large.
It suffices to show that every large real $N$ lies within a bounded
distance from a sum $f(x_1) + \dots + f(x_s)$. Suppose that $f(x) =
p(x) + r(x)$, where
\[
p(x) = \alpha_kx^k + \dots + \alpha_1x, \quad r(x) \ll (|x| +
1)^{\delta},
\]
with $0 < \delta < 1/2$. We define $U$ and $V$ in terms of $N$ by
the equations
\[
N = sf(U + V) + s\alpha_kV^k, \quad V = U^{1 - \delta}.
\]
These are well-defined, since the function $f(U + V) + \alpha_kV^k$
is strictly increasing for $U$ sufficiently large. We then set $X =
[U]$ and $Y = V + \{ U \}$, so that $X + Y = U + V$. Using the
Taylor expansion
\[
f(X + Y) = f(X) + \sum_{j = 1}^k \frac {f^{(j)}(X)}{j!}Y^j +
O(X^{-\delta}),
\]
we obtain
\begin{equation}\label{5.1}
  N = sf(X) + \sum_{j = 1}^{k - 1} \frac {f^{(j)}(X)}{j!}(sY^j) + s\alpha_k(Y^k + V^k) + O(X^{-\delta}).
\end{equation}
We are going to use the result on the Hilbert--Kamke problem to
replace the terms $sY^j$ in \eqref{5.1} by $s$-fold sums of $j$th
powers. Let $\Delta_0$ denote the determinant in \eqref{2.4}. The
idea is to approximate each $sY^j$ by a suitable multiple of
$\Delta_0$ and carry the residual error to the next step. We first
find a positive integer $M_1$ such that $0 < sY^1 - \Delta_0 M_1
\leq \Delta_0$. Let $E_1 = sY - \Delta_0 M_1$ be the residual error.
Then \eqref{5.1} becomes
\begin{align*}
  N = {}& sf(X) + \frac{f'(X)}{1!} \Delta_0 M_1 + \frac{f'(X)}{1!} E_1 + \frac {f''(X)}{2!}(sY^2) + \cdots \\
  & + s \alpha_k(Y^k + V^k) + O(X^{-\delta}) \\
  ={}& sf(X) + \frac{f'(X)}{1!} \Delta_0 M_1 + \frac {f''(X)}{2!} \left( sY^2 + \frac{2f'(X)}{f''(X)} E_1 \right) + \cdots \\
  & + s\alpha_k(Y^k + V^k) + O(X^{-\delta}).
\end{align*}
Next, we find a positive integer $M_2$ such that
\[
0 < sY^2 + \frac{2f'(X)}{f''(X)} E_1 - \Delta_0 M_2 \leq \Delta_0.
\]
We then carry the residual error
\[
E_2 = s Y^2 + \frac{2f'(X)}{f''(X)} E_1 - \Delta_0 M_2
\]
to next step and repeat the process. Upon setting $E_0 = 0$, this
process yields a recursive integer sequence $M_1, M_2, \dots, M_k$,
defined by the conditions
\begin{gather*}
  0 < E_j = sY^j + \frac {j f^{(j - 1)}(X)}{f^{(j)}(X)}E_{j - 1} - \Delta_0 M_j \le \Delta_0 \quad (1 \le j < k); \\
  0 < E_k = s(Y^k + V^k) + \frac {k f^{(k - 1)}(X)}{f^{(k)}(X)}E_{k -
    1} - \Delta_0 M_k \le \Delta_0.
\end{gather*}
Substituting into \eqref{5.1}, we get
\begin{equation}\label{5.2}
  N = sf(X) + \sum_{j = 1}^{k} \frac {f^{(j)}(X)}{j!} \Delta_0 M_j + O(1),
\end{equation}
By the choices of the $M_j$'s, we have
\begin{align*}
  \Delta_0 M_j &= sY^j(1 + O(XY^{-2})) \quad (1 \le j < k), \\
  \Delta_0 M_k &= 2sY^k(1 + O(XY^{-2})),
\end{align*}
whence
\[
\Delta_0 M_j = 2^{-j/k}s^{1 - j/k} (\Delta_0 M_k)^{j/k}(1 +
O(XY^{-2})) \quad (1 \le j < k).
\]
Recalling that $Y \gg X^{1 - \delta}$ and $\delta < 1/2$, we
conclude that the integers $\Delta_0 M_1, \dots, \Delta_0 M_k$
satisfy conditions (b) and (c) in \S\ref{s2.4}. Therefore, if $s$ is
sufficiently large, there exist positive integers $y_1, \dots, y_s$
such that
\[
\Delta_0 M_j = y_1^j + \dots + y_s^j \quad (1 \le j \le k).
\]
Substituting these into \eqref{5.2}, we obtain
\begin{align*}
  N &= sf(X) + \sum_{j = 1}^{k} \frac {f^{(j)}(X)}{j!}(y_1^j + \dots + y_s^j) + O(1) \\
  &= f(X + y_1) + \dots + f(X + y_s) + O(1).
\end{align*}
Thus, as desired, $N$ lies within a bounded distance of a sum of the
form $f(x_1) + \dots + f(x_s)$. \qed

\bibliographystyle{amsplain}

\end{document}